\newtheorem{theorem}{Theorem}[section]
\newtheorem{lemma}[theorem]{Lemma}
\theoremstyle{definition}
\theoremstyle{remark}
\numberwithin{equation}{section}
\begin{document}

\title[]
 {Deforming a convex hypersurface with low entropy by its Gauss curvature}
\author[M.N. Ivaki]{Mohammad N. Ivaki}
\address{Institut f\"{u}r Diskrete Mathematik und Geometrie, Technische Universit\"{a}t Wien,
Wiedner Hauptstr. 8--10, 1040 Wien, Austria}
\curraddr{}
\email{mohammad.ivaki@tuwien.ac.at}

\dedicatory{}
\subjclass[2010]{Primary 53C44, 52A05; Secondary 35K55}
\keywords{}
\begin{abstract}
We prove the asymptotic roundness under normalized Gauss curvature flow provided entropy is initially small enough.
\end{abstract}

\maketitle
\section{Introduction}
Gauss curvature flow was proposed by Firey \cite{Firey} as the model for the wear of stones under tidal waves when $n=2.$ Assuming the existence, uniqueness, and regularity of the solutions (settled later by K.S. Chou (K. Tso) \cite{Tso}), he proved that if at the start the stone is centrally symmetric, then its ultimate shape is a unit ball. He conjectured that the same conclusion must hold if one starts the flow from any convex surface. Andrews gave an affirmative answer to this question \cite{Andrews1991}. To prove Firey's conjecture, Andrews made use of the parabolic maximum principle applied to the difference of principal curvatures and Chow's Harnack estimate (cf. \cite{Chow1991}) to obtain regularity of the normalized solution and the asymptotic roundness. In higher dimension, Guan and Ni by studying some fine properties of Firey's entropy functional and using Chow's Harnack inequality established the regularity of the normalized solutions without imposing any condition on initial smooth, strictly convex hypersurfaces. It then remains an interesting question to understand the nature of ultimate shapes. In this paper we prove the asymptotic roundness provided Firey's entropy functional is initially small. We prove a stability result for Firey's entropy functional and use some results of Guan-Ni \cite{GN} and a technique introduced by Andrews-McCoy in \cite{Andrewsjames2012} to show that if entropy is initially small, then the limiting shape must be "close" (depending only on the entropy of the initial body) to the unit ball in $C^k$-norm for any $k\ge0$. In particular, normalized hypersurfaces satisfy Chow's pinching condition \cite{Chow1985} for times close to the extinction time, and so they must flow to the unit ball.

\section{Results}
The setting of this paper is $n$-dimensional Euclidean space, $\mathbb{R}^{n}$. A compact convex subset of $\mathbb{R}^{n}$ with non-empty interior is called a \emph{convex body}. The set of convex bodies in $\mathbb{R}^{n}$ is denoted by $\mathcal{K}^n$. Write $\mathcal{F}^n$ for the set of smooth ($C^{\infty}$-smooth), strictly convex bodies in $\mathcal{K}^n$. The $n$-dimensional Lebesgue measure of $K\in\mathcal{K}^n$ is denoted by $V(K).$

The unit ball of $\mathbb{R}^n$ is denoted by $B$ and its boundary is denoted by $\mathbb{S}^{n-1}$. We write $\nu:\partial K\to \mathbb{S}^{n-1}$ for the Gauss map of $\partial K$, the boundary of $K\in\mathcal{F}^n$. That is, at each point $x\in\partial K$, $\nu(x)$ is the unit outwards normal at $x$.

The support function of $K\in \mathcal{K}^n$ as a function on the unit sphere is defined by
\[s_K(u):= \max_{x\in K} x\cdot u\]
for each $u\in\mathbb{S}^{n-1}$. Let $K$ be a smooth, strictly convex body and let $\varphi:M\to\mathbb{R}^n$ be a smooth parametrization of $\partial K$.
The support function of $K$ then can be expressed as
\[s_K(\nu(x)):=\varphi(x)\cdot \nu(x)\]
for each $x\in\partial K$. Write $\bar{g}_{ij}$ and $\bar{\nabla}$ for the standard metric and the standard Levi-Civita connection of $\mathbb{S}^{n-1}$. We denote the Gauss curvature of $\partial K$ by $\mathcal{K}$ and as a function on $\partial K$, it is related to the support function by \[\sigma_{n-1}:=\frac{1}{\mathcal{K}\circ\nu^{-1}}:=\frac{\det(\bar{\nabla}_i\bar{\nabla}_js+\bar{g}_{ij}s)}{\det \bar{g}_{ij}}.\]

Let $\varphi_0$ be a smooth, strictly convex embedding of $\partial K$. A family of convex bodies $\{K_t\}_t\subset \mathcal{F}^n$ defined by smooth embeddings $\varphi:M\times [0,T)\to \mathbb{R}^n$ is said to be a solution of the Gauss curvature flow if $\varphi$ satisfies the initial value problem
\begin{equation}\label{e: flow0}
 \partial_{t}\varphi(x,t)=-\mathcal{K}(x,t)\, \nu(x,t),~~
 \varphi(\cdot,0)=\varphi_{0}(\cdot).
\end{equation}
In this equation $\mathcal{K}(x,t)$ is the Gauss curvature of $\partial K_t:=\varphi(M,t)$ at the point where the outer unit normal is $\nu(x,t)$, and $T=\frac{V(K_0)}{nV(B)}$ is the maximal time that the flow exists. By Tso's work \cite{Tso} $K_t$ shrink to a point as $t\to T$.
It is easy to see that support functions of $\{K_t\}$ satisfy
\begin{align}\label{e: flow1}
 \partial_{t}s(z,t)=-\frac{1}{\sigma_{n-1}(z,t)},~~
 s(\cdot ,t)=s_{K_t}(\cdot).
\end{align}
P. Guan and L. Ni \cite{GN} studied the following entropy functional on the set of convex bodies:
\[\mathcal{E}(K)=\sup_{x\in \operatorname{int}K}\int_{\mathbb{S}^{n-1}}\log (s_K(u)-x\cdot u)d\theta.\]
There is a unique point $e(K)$ for which supremum is achieved, and if $V(K)=V(B)$ then $\mathcal{E}(K)\geq 0$, and the equality is achieved only for unit balls. Studying some of the fine properties of this functional played a key role in obtaining their $C^0$ and $C^2$ estimates for normalized solutions of the Gauss curvature flow which combined by Andrews's result in \cite{Andrews1997} led to the $C^{\infty}$ convergence of the normalized solutions to a smooth strictly convex, self-similar shrinker (e.q., a solution of $s=\frac{1}{\sigma_{n-1}}$) . It is an interesting question whether or not the ball is the unique ultimate shape of the Gauss curvature flow.

Let $K$ and $L$ be two convex bodies in $\mathbb{R}^{n}$ with respective support functions $s_K$ and $s_L$. The Hausdorff metric (cf. \cite[Lemma 1.8.14]{Schneider}) between two convex bodies $K$ and $L$ is defined as
\[\delta_{\mathcal{H}}(K,L)=\sup_{u\in\mathbb{S}^{n-1}}|s_K(u)-s_L(u)|.\]
Set $\tilde{K}:=\left(V(B)/V(K)\right)^{1/n}K.$ In this paper we first prove a stability result for the inequality $\mathcal{E}(\tilde{K})\geq 0:$
\begin{theorem}\label{thm: thm 1}
There exists $\gamma_n$, depending only on $n$, with the following property.
If $\mathcal{E}(\tilde{K})\leq\varepsilon$, then $\delta_{\mathcal{H}}(\tilde{K}-e(\tilde{K}),B)\leq \gamma_n\varepsilon^{\frac{1}{n+1}}$.
\end{theorem}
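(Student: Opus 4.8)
The plan is to make quantitative the Blaschke--Santal\'o proof of the inequality $\mathcal{E}(\tilde K)\ge 0$. Let $\mu$ be the rotation-invariant probability measure on $\mathbb{S}^{n-1}$; since the statement is homogeneous in $\varepsilon$ up to the constant $\gamma_n$, we may take $d\theta=d\mu$ and assume $\varepsilon$ is at most a dimensional constant $\varepsilon_0$. After a translation assume $e(\tilde K)=0$ and set $L=\tilde K$; let $x_S$ be the Santal\'o point of $L$ and $M=L-x_S$, so $0$ is the Santal\'o point of $M$. Using $s_M=1/\rho_{M^\circ}$ (polarity), $\int_{\mathbb{S}^{n-1}}\rho_{M^\circ}^n\,d\mu=V(M^\circ)/V(B)$ (polar coordinates), concavity of $\log$, and finally Blaschke--Santal\'o together with $V(M)=V(L)=V(B)$ giving $V(M^\circ)\le V(B)$, one has
\[
\mathcal{E}(\tilde K)=\sup_{x}\int_{\mathbb{S}^{n-1}}\log s_{L-x}\,d\mu\ \ge\ \int_{\mathbb{S}^{n-1}}\log s_{M}\,d\mu\ =\ -\int_{\mathbb{S}^{n-1}}\log\rho_{M^\circ}\,d\mu\ \ge\ -\tfrac1n\log\tfrac{V(M^\circ)}{V(B)}\ \ge\ 0 .
\]
The key point is that the slack in the middle (Jensen) step equals $\tfrac1n\big(\log\mathbb{E}_\mu[\rho_{M^\circ}^n]-\mathbb{E}_\mu[\log\rho_{M^\circ}^n]\big)$, which on any interval $[a,b]\subset(0,\infty)$ containing the range of $\rho_{M^\circ}^n$ is bounded below by $c(a,b)\,\mathrm{Var}_\mu(\rho_{M^\circ}^n)$.

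Next I would record a crude bound. For any convex body $L_0$ with $V(L_0)=V(B)$ and centroid at the origin, pick $p\in L_0$ realizing $R:=\sup_u s_{L_0}(u)$ (so $|p|=R$); then $s_{L_0}(u)\ge\max(0,p\cdot u)$, while Minkowski symmetry about the centroid gives $s_{L_0}(u)+s_{L_0}(-u)\le(n+1)s_{L_0}(u)$, hence $s_{L_0}(u)\ge|p\cdot u|/(n+1)$ and $\int\log s_{L_0}\,d\mu\ge\log R-C_n$. Applying this to the centroid translate of $\tilde K$ and using $\mathcal{E}(\tilde K)\ge\int\log s_{L_0}\,d\mu$ yields $\mathrm{diam}(\tilde K)\le R_n$. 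With this uniform diameter bound, the normalization $V=V(B)$, and continuity of $V$, of $\mathcal{E}$ and of the entropy point $e(\cdot)$ on such bodies, Blaschke's selection theorem and the equality case ``$\mathcal{E}=0\iff$ ball'' give: for every dimensional $\delta_0>0$ there is a dimensional $\varepsilon_0>0$ with $\mathcal{E}(\tilde K)\le\varepsilon_0\Rightarrow\delta_{\mathcal{H}}(\tilde K,B)\le\delta_0$ (argue by contradiction with a Hausdorff-convergent sequence). Since the Santal\'o point and $e(\cdot)$ depend Lipschitz-continuously on the body near $B$ (the defining functionals have nondegenerate Hessian there), we also get $|x_S|\le C_n\delta_0$; shrinking $\delta_0$, the range of $\rho_{M^\circ}$ lies in $[\tfrac12,2]$.

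Now the main estimate. From the first paragraph, $\varepsilon\ge\mathcal{E}(\tilde K)\ge 0+\tfrac{c_n}{n}\mathrm{Var}_\mu(\rho_{M^\circ}^n)$, so $\mathrm{Var}_\mu(\rho_{M^\circ}^n)\le C_n\varepsilon$; since $s_M=(\rho_{M^\circ}^n)^{-1/n}$ and $t\mapsto t^{-1/n}$ is Lipschitz on $[\tfrac12,2]$, this becomes $\int_{\mathbb{S}^{n-1}}(s_M-c_0)^2\,d\mu\le C_n\varepsilon$, where $c_0:=(V(M^\circ)/V(B))^{-1/n}$ lies within $C_n\delta_0$ of $1$. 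To upgrade this $L^2$ bound to $L^\infty$ I would use that $s_M$ is Lipschitz on $\mathbb{S}^{n-1}$ with constant $\sup_u s_M(u)\le 2$: if $|s_M(u_0)-c_0|=t$, then $|s_M-c_0|\ge t/2$ on a spherical cap of $\mu$-measure $\ge c_n t^{n-1}$, so $C_n\varepsilon\ge c_n't^{n+1}$ and hence $\|s_M-c_0\|_\infty\le C_n\varepsilon^{1/(n+1)}$. Finally I would undo the two translations and the normalization: $\delta_{\mathcal{H}}(M,c_0B)\le C_n\varepsilon^{1/(n+1)}$; comparing volumes (volume is Lipschitz in $\delta_{\mathcal{H}}$ on bodies in a fixed ball, $V(M)=V(B)$) forces $|c_0-1|\le C_n\varepsilon^{1/(n+1)}$, so $\delta_{\mathcal{H}}(\tilde K,B+x_S)=\delta_{\mathcal{H}}(M,B)\le C_n\varepsilon^{1/(n+1)}$; Lipschitz dependence of $e(\cdot)$ gives $|x_S|=|e(\tilde K)-e(B+x_S)|\le C_n\varepsilon^{1/(n+1)}$; and therefore $\delta_{\mathcal{H}}(\tilde K,B)\le\delta_{\mathcal{H}}(\tilde K,B+x_S)+|x_S|\le\gamma_n\varepsilon^{1/(n+1)}$.

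The main obstacle is precisely this last paragraph: one must apply the quantitative Jensen inequality to a quantity ($\rho_{M^\circ}^n$) that is at once (i) pinned near a constant by the smallness of $\mathcal{E}(\tilde K)$, (ii) confined to a fixed compact subinterval of $(0,\infty)$ — which is what the crude closeness-to-$B$ step buys and what makes the Jensen slack comparable to the variance — and (iii) able to control $\delta_{\mathcal{H}}(\tilde K,B)$ after undoing the translations to the entropy point and to the Santal\'o point together with the volume normalization, the fussiest bookkeeping. The exponent $1/(n+1)$ is forced at the $L^2\to L^\infty$ step: a bump of height $t$ in a function Lipschitz on the $(n-1)$-dimensional sphere has $L^2$-mass of order $t^{n+1}$, so an $O(\varepsilon)$ bound on the $L^2$-mass yields only an $O(\varepsilon^{1/(n+1)})$ bound on the height.
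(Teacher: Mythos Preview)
Your argument is correct and shares the paper's overall architecture: establish an $L^2$-closeness of a suitable translate of $\tilde K$ to a near-unit sphere, upgrade $L^2$ to $L^\infty$ via the Lipschitz property of support functions (this step is exactly Vitale's lemma, which the paper quotes and you reprove inline, and it is what forces the exponent $1/(n+1)$), then clean up the radius and the translation. The genuine differences are in the first and last steps. For the $L^2$ bound, you center at the Santal\'o point and use a quantitative Jensen inequality (strong concavity of $\log$ on a fixed compact interval makes the Jensen slack dominate $\mathrm{Var}_\mu(\rho_{M^\circ}^n)$); the paper instead centers at the point $e_{-2}(\tilde K)$ minimizing $\int s^{-2}$ and uses the explicit Cauchy--Schwarz identity
\[
\Big\|\tfrac{1/s_{-2}}{\|1/s_{-2}\|_2}-\tfrac{1}{\|1\|_2}\Big\|_2^2=2\Big(1-\tfrac{\int 1/s_{-2}}{\|1/s_{-2}\|_2\|1\|_2}\Big),
\]
together with the chain $\exp(-\mathcal{E}/(nV(B)))\le \mathcal{E}_{-1}/(nV(B))\le(\mathcal{E}_{-2}/(nV(B)))^{1/2}\le 1$. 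For the center shift, you invoke Lipschitz dependence of $e(\cdot)$ near $B$ (valid, via strong concavity of $x\mapsto\int\log(s_K-x\cdot u)$), while the paper quotes Guan--Ni's Lemma~4.1 to get $|e_{-2}(\tilde K)-e(\tilde K)|^2\le\mathcal{E}(\tilde K)/C$ directly. One trade-off: your quantitative Jensen step requires a preliminary compactness argument to pin $\rho_{M^\circ}$ into $[1/2,2]$, whereas the paper's Cauchy--Schwarz identity needs only the diameter bound (Guan--Ni, Corollary~2.1) and no a~priori range; this makes the paper's route slightly more direct, though your soft step costs nothing in the end since large $\varepsilon$ is absorbed into $\gamma_n$ via the diameter bound.
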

\begin{theorem}\label{thm: thm 2}
There exists $\varepsilon_0>0$, depending only on $n$, with the following property. If $K_t\in \mathcal{F}^n$ is a solution of the Gauss curvature flow with $\mathcal{E}(\tilde{K}_0)\leq\varepsilon\leq\varepsilon_0$ that shrinks to the origin, then there is a $0\leq\bar{t}<T$ such that $\|s_{\tilde{K}_{t}-e(\tilde{K}_t)}-1\|_{C^k}\leq f_{n,k}(\varepsilon)$ for all $t\ge \bar{t}$. Here $f_{n,k}\ge0$ is a continuous, non-decreasing function, independent of the solution, such that $f_{n,k}(\varepsilon)\to 0$ as $\varepsilon\to0.$
\end{theorem}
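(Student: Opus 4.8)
The plan is to combine the $C^0$-stability of Theorem~\ref{thm: thm 1} with the monotonicity of Firey's entropy and the known asymptotics of the normalized flow, and then to bootstrap $C^0$-closeness into $C^k$-closeness by elliptic regularity together with a compactness argument for self-similar shrinkers of small entropy. First I would use that Firey's entropy is non-increasing along the normalized flow, $\frac{d}{dt}\mathcal E(\tilde K_t)\le 0$ --- the differential inequality behind the $C^0$-estimate of Guan--Ni \cite{GN}, obtained by differentiating at the optimal point $e(\tilde K_t)$ and applying Cauchy--Schwarz to $\int_{\mathbb{S}^{n-1}}(s-e(\tilde K_t)\cdot u)\,\sigma_{n-1}\,d\theta$. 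Hence $\mathcal E(\tilde K_t)\le\mathcal E(\tilde K_0)\le\varepsilon$ for all $t\in[0,T)$, and Theorem~\ref{thm: thm 1} gives, uniformly in $t$,
\[
\bigl\|s_{\tilde K_t-e(\tilde K_t)}-1\bigr\|_{C^0}=\delta_{\mathcal H}\bigl(\tilde K_t-e(\tilde K_t),B\bigr)\le\gamma_n\varepsilon^{1/(n+1)}=:\rho(\varepsilon).
\]
By the results of Guan--Ni \cite{GN} and Andrews \cite{Andrews1997}, $\tilde K_t-e(\tilde K_t)$ converges in $C^\infty$ as $t\to T$ to a smooth, strictly convex self-similar shrinker $\widehat K$ (thus $s_{\widehat K}=1/\sigma_{n-1}$) with $e(\widehat K)=0$; since $\mathcal E$ and $V$ are continuous under $C^0$-convergence and $V(\tilde K_t)=V(B)$, the limit satisfies $\mathcal E(\widehat K)\le\varepsilon$, $V(\widehat K)=V(B)$ and $\|s_{\widehat K}-1\|_{C^0}\le\rho(\varepsilon)$.

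The crux is to upgrade this $C^0$-bound to a $C^k$-bound uniformly over all such shrinkers. Let $\mathcal S_\varepsilon$ be the set of smooth, strictly convex self-similar shrinkers $\widehat K$ with $e(\widehat K)=0$, $V(\widehat K)=V(B)$ and $\mathcal E(\widehat K)\le\varepsilon\le\varepsilon_0$; as above, each such $\widehat K$ has $1-\rho(\varepsilon)\le s_{\widehat K}\le 1+\rho(\varepsilon)$, and its support function solves the fully nonlinear elliptic equation $s\,\det(\bar\nabla_i\bar\nabla_j s+\bar g_{ij}s)=\det\bar g_{ij}$, which in logarithmic form is concave in the Hessian. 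The essential input is an a priori curvature bound: the $C^2$-estimate of Guan--Ni \cite{GN} together with the bootstrapping technique of Andrews--McCoy \cite{Andrewsjames2012} bounds the principal radii of $\widehat K$ from above by some $\Lambda(n,\varepsilon)$ depending only on $n$ and $\varepsilon$ (the geometry of $\widehat K$ is controlled through $\rho(\varepsilon)$); since their product is $\sigma_{n-1}=1/s_{\widehat K}\ge 1/(1+\rho(\varepsilon))$, they are also bounded below, so the equation is uniformly elliptic on $\mathcal S_\varepsilon$. Evans--Krylov then yields a $C^{2,\alpha}$-bound and Schauder bootstrapping gives $\|s_{\widehat K}\|_{C^m}\le C(n,m,\varepsilon)$ for every $m$, so $\mathcal S_\varepsilon$ is precompact in $C^\infty$. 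A compactness argument now shows that the quantity $f^\ast_{n,k}(\varepsilon):=\sup\{\|s_{\widehat K}-1\|_{C^k}:\widehat K\in\mathcal S_\varepsilon\}$, which is finite by the preceding bounds, tends to $0$ as $\varepsilon\to 0$: any $\widehat K_j\in\mathcal S_{\varepsilon_j}$ with $\varepsilon_j\to 0$ has a subsequence converging in $C^\infty$ to a shrinker with $V=V(B)$, $e=0$ and $\mathcal E=0$, which by the equality case of $\mathcal E\ge 0$ is the unit ball. After replacing $f^\ast_{n,k}$ by a continuous, non-decreasing majorant with the same limit, I keep the notation.

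To conclude, fix $k$ and a solution as in the statement. The $C^\infty$-convergence $\tilde K_t-e(\tilde K_t)\to\widehat K$ provides some $\bar t=\bar t(K_0,k)<T$ with $\|s_{\tilde K_t-e(\tilde K_t)}-s_{\widehat K}\|_{C^k}\le\varepsilon$ for all $t\ge\bar t$, and since $\widehat K\in\mathcal S_\varepsilon$ the previous step gives $\|s_{\widehat K}-1\|_{C^k}\le f^\ast_{n,k}(\varepsilon)$; the triangle inequality then yields $\|s_{\tilde K_t-e(\tilde K_t)}-1\|_{C^k}\le f^\ast_{n,k}(\varepsilon)+\varepsilon=:f_{n,k}(\varepsilon)$ for $t\ge\bar t$, with $f_{n,k}$ continuous, non-decreasing, independent of the solution, and tending to $0$ as $\varepsilon\to 0$. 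Taking $\varepsilon_0$ small enough that $\rho(\varepsilon_0)<1$ and that the invoked estimates apply completes the argument.

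The step I expect to be the main obstacle is the a priori curvature bound $\Lambda(n,\varepsilon)$ for shrinkers of small entropy: one must extract genuine, scale-invariant pinching of the principal curvatures from mere $C^0$-closeness to a ball, and it is precisely here that the fine properties of Firey's entropy established by Guan--Ni and the Andrews--McCoy bootstrapping are indispensable. By contrast the remaining ingredients --- entropy monotonicity, Theorem~\ref{thm: thm 1}, the already-known $C^\infty$-convergence to a shrinker, and the final compactness --- are comparatively soft.
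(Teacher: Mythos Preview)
Your proposal is correct but follows a genuinely different route from the paper's proof.

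The paper works \emph{directly with the parabolic flow}: after entropy monotonicity and Theorem~\ref{thm: thm 1} give the pinching $r_+(\tilde K_t)/r_-(\tilde K_t)\le 1+\eta(\varepsilon)$, it applies the Andrews--McCoy argument and Andrews's speed estimates to obtain two-sided bounds on $\mathcal K(T-t)^{(n-1)/n}$, rescales to $\bar s^{t_\ast}$ on $[-1,0]$, invokes Tso and Andrews again for principal-curvature bounds, and then Krylov--Safonov plus Schauder yield \emph{solution-independent} constants $C_k$ with $\|s_{\tilde K_t-e(\tilde K_t)}-1\|_{C^k}\le C_k$ for $t\ge\bar t$. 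The smallness in $\varepsilon$ enters only at the very end, via interpolation between the $C^0$ bound $\gamma_n\varepsilon^{1/(n+1)}$ and the uniform higher-order bounds.

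You instead treat the Guan--Ni--Andrews $C^\infty$-convergence to a shrinker $\widehat K$ as a black box, reduce the problem to an \emph{elliptic} one on the compact family $\mathcal S_\varepsilon$ of self-similar shrinkers with entropy $\le\varepsilon$, and extract $f_{n,k}(\varepsilon)\to 0$ by a compactness/rigidity argument (any $C^\infty$-limit of $\mathcal S_{\varepsilon_j}$ with $\varepsilon_j\to 0$ has $\mathcal E=0$, hence is the unit ball). The transfer back to $\tilde K_t$ is soft, using that $\bar t$ may depend on $K_0$.

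Both approaches hinge on the same hard step --- a principal-curvature bound depending only on $n$ and the pinching ratio, supplied by the Andrews--McCoy technique --- and you flag this correctly. What each buys: the paper's route gives uniform $C^k$ bounds along the whole tail of the flow without invoking the full convergence theorem, and the smallness comes from a clean interpolation; your route is conceptually tidier (elliptic regularity on a fixed equation plus compactness), but leans on the convergence result and requires running the parabolic Andrews--McCoy machinery on the homothetic solution anyway to obtain $\Lambda(n,\varepsilon)$, so no real work is saved at the crux.
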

The next theorem gives the asymptotic roundness if the entropy is initially small.
\begin{theorem}\label{thm: thm 3}
Let $n\ge 4.$ There exists $\varepsilon_1>0$, depending only on $n$, with the following property. If $K_t\in \mathcal{F}^n$ is a solution of the Gauss curvature flow that shrinks to the origin and $\mathcal{E}(\tilde{K}_0)\leq\varepsilon_1$, then $\tilde{K}_t$ converges to the unit ball in $C^{\infty}$.
\end{theorem}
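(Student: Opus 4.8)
The plan is to bootstrap from Theorem~\ref{thm: thm 2}, which already gives us that, for a solution with small enough initial entropy, the normalized hypersurfaces $\tilde K_t$ (recentered at $e(\tilde K_t)$) become $C^k$-close to the unit ball for all times $t\ge\bar t$, with closeness controlled by $f_{n,k}(\varepsilon_1)$. The goal is to translate this smallness into Chow's pinching condition from \cite{Chow1985}, so that the known convergence result for pinched initial data applies and forces $\tilde K_t\to B$ in $C^\infty$.

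\textbf{Step 1: From $C^2$-closeness to curvature pinching.} First I would record that the principal curvatures of a hypersurface are continuous functions of the support function in the $C^2$-norm (on the sphere side), uniformly over bodies that are themselves uniformly $C^2$-close to $B$ — this is where strict convexity of $B$, i.e.\ the invertibility of $\bar\nabla_i\bar\nabla_j s+\bar g_{ij}s$ at $s\equiv 1$, is used. Hence if $\|s_{\tilde K_t-e(\tilde K_t)}-1\|_{C^2}\le f_{n,2}(\varepsilon_1)$, then the principal curvatures $\lambda_1(\cdot,t),\dots,\lambda_{n-1}(\cdot,t)$ of $\partial\tilde K_t$ all lie within $C\,f_{n,2}(\varepsilon_1)$ of $1$, and in particular $\max_i\lambda_i/\min_i\lambda_i\le 1+C\,f_{n,2}(\varepsilon_1)$ for $t\ge\bar t$. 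Since $f_{n,2}(\varepsilon)\to0$ as $\varepsilon\to0$, by shrinking $\varepsilon_1$ we can make the pointwise ratio of largest to smallest principal curvature as close to $1$ as we like, uniformly for $t\ge\bar t$.

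\textbf{Step 2: Restart the flow and invoke Chow's theorem.} Chow's pinching condition \cite{Chow1985} (in the $n\ge4$ range, where $\mathcal K^{1/(n-1)}$-type pinching propagates) requires exactly such a bound on the principal curvatures of the \emph{initial} hypersurface. I would pick some time $t_1\ge\bar t$ before the extinction time $T$, observe that $\tilde K_{t_1}$ — equivalently, after an affine normalization, the unnormalized body $K_{t_1}$ — is a smooth strictly convex hypersurface satisfying Chow's pinching condition by Step~1 (choosing $\varepsilon_1$ so that $Cf_{n,2}(\varepsilon_1)$ is below Chow's threshold). By uniqueness of smooth solutions of \eqref{e: flow0} (Tso \cite{Tso}), the restriction of our flow to $[t_1,T)$ \emph{is} the Gauss curvature flow starting from $K_{t_1}$. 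Chow's theorem then gives that the corresponding normalized flow converges in $C^\infty$ to the unit ball as $t\to T$. Since the two normalizations (our $\tilde K_t$ vs.\ Chow's rescaling) differ only by the ratio of volumes, which tends to the correct constant, and by a translation by $e(\tilde K_t)\to 0$, this yields $\tilde K_t\to B$ in $C^\infty$.

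\textbf{Main obstacle.} The analytic content is really all in Theorems~\ref{thm: thm 1} and~\ref{thm: thm 2}; the remaining difficulty in Theorem~\ref{thm: thm 3} is making Step~1 rigorous and quantitative — namely, verifying that the precise form of Chow's pinching hypothesis from \cite{Chow1985} is implied by $C^2$-closeness of the support function to $1$, and that the constant $C$ relating these can be taken uniform (independent of the solution) on the relevant $C^2$-neighborhood of $B$. One must also check that the restriction $n\ge 4$ enters only through Chow's theorem and not elsewhere, and confirm that translating the body by $e(\tilde K_t)$ (which does not affect curvatures) is harmless when matching up with Chow's centering convention. I expect the matching of normalization conventions and the bookkeeping of which geometric quantities are controlled by $f_{n,2}$ to be the only genuinely fiddly part.
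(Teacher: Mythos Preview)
Your proposal is correct and follows essentially the same route as the paper: use Theorem~\ref{thm: thm 2} to make $\|s_{\tilde K_t-e(\tilde K_t)}-1\|_{C^2}$ small for $t$ near $T$, observe that this forces Chow's pinching condition, and then invoke \cite{Chow1985} from such a late time. The paper's proof is in fact more terse than yours---it simply asserts that $\varepsilon_1$ can be chosen so that $f_{n,2}(\varepsilon_1)$ puts $\tilde K_t$ inside Chow's pinching class---so your Step~1 discussion and the remarks on matching normalizations only add detail rather than diverge in strategy.
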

Under a certain pinching estimate, an upper bound on the ratio of the minimum and maximum principal curvatures of the initial hypersurface, convergence of the normalized solutions to the unit ball in $C^{\infty}$ was proved by B. Chow \cite{Chow1985}. Continuity of the functional $V(K)\exp(-\mathcal{E}(K)/V(B))$ in the Hausdorff distance shows that given an $\varepsilon>0$, we can find a smooth, strictly convex body with an arbitrarily large ratio of the minimum and maximum principal curvatures such that $V(K)\exp(-\mathcal{E}(K)/V(B))\ge V(B)/(1+\varepsilon)$ and thus $\mathcal{E}(\tilde{K})$ is still very small.\footnote{For example, $\tilde{K}$ can be obtained by cutting off negligible volumes from opposite caps of a ball and smoothing out the spherical edges and then normalizing the body to achieve the volume of the unit ball.} Therefore Theorem \ref{thm: thm 3} may be considered as a certain improvement on Chow's result.
\section{Proof of Theorem \ref{thm: thm 1}}
We start with the definition of $\delta_2$ metric of convex bodies.
The $\delta_2$ distance of $K$ and $L$ is defined by
\[\delta_2(K,L)^2=\|s_K-s_L\|_2^2:=\int_{\mathbb{S}^{n-1}}(s_K-s_L)^2d\theta.\]
Denote the diameter of a compact set $K$ by $D(K)$.
We recall \cite[Lemma 7.6.4]{Schneider} due to Vitale on the equivalency of $\delta_{\mathcal{H}}$ and $\delta_2$:
\begin{lemma}[Vitale, 1985]\label{lem: Vitale}
\[\delta_2(K,L)^2\geq \alpha_{n}D(K\cup L)^{1-n}\delta_{\mathcal{H}}(K,L)^{n+1},\]
where $\alpha_n=\frac{nV(B)\beta(3,n-1)}{\beta(1/2,(n-1)/2)}$ and $\beta(\cdot,\cdot)$ is the beta integral.
\end{lemma}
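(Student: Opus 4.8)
The plan is to reduce the statement to a one-variable estimate for the single function $f:=s_K-s_L$ on $\mathbb{S}^{n-1}$. Write $M:=\delta_{\mathcal H}(K,L)=\|f\|_{\infty}$ and $D:=D(K\cup L)$; if $M=0$ there is nothing to prove, so assume $M>0$, and, since $\delta_{\mathcal H}$, $\delta_2$ and $D(K\cup L)$ are all symmetric in $K$ and $L$, relabel so that the supremum defining $M$ is attained with $f(u_0)=M$ for some $u_0\in\mathbb{S}^{n-1}$. The argument then splits into a Lipschitz estimate for $f$, a ``conical'' pointwise minorant for $|f|$ at $u_0$, and an explicit lower bound for the spherical integral of its square.

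For the Lipschitz estimate I would choose $p\in K$ with $p\cdot u=s_K(u)$ and $q\in L$ with $q\cdot v=s_L(v)$; then $f(u)\le(p-q)\cdot u$ (because $s_L(u)\ge q\cdot u$) and $f(v)\ge(p-q)\cdot v$ (because $s_K(v)\ge p\cdot v$), so
\[
f(u)-f(v)\le(p-q)\cdot(u-v)\le|p-q|\,|u-v|\le D\,|u-v|,
\]
since $p,q\in K\cup L$; interchanging $u$ and $v$ gives $|f(u)-f(v)|\le D\,|u-v|$. Putting $v=u_0$ yields $f(u)\ge M-D\,|u-u_0|$, and where the right-hand side is nonnegative this forces $f(u)\ge0$, so $|f(u)|\ge\bigl(M-D\,|u-u_0|\bigr)_{+}$ for every $u$. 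Hence $\delta_2(K,L)^2=\|f\|_2^2\ge\int_{\mathbb{S}^{n-1}}\bigl(M-D\,|u-u_0|\bigr)_{+}^{2}\,d\theta$.

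To estimate this integral I would first note that $\delta_{\mathcal H}(K,L)\le D(K\cup L)$ (any point of $K$ is within $D$ of any point of $L$), so $M/D\le 1$ and the integrand lives on the cap of chordal radius $M/D$ about $u_0$, which sits in an open hemisphere. Writing $d\theta=\omega_{n-2}\sin^{n-2}\phi\,d\phi$ in geodesic polar coordinates about $u_0$ (with $\omega_{n-2}$ the area of $\mathbb{S}^{n-2}$ and $|u-u_0|=2\sin(\phi/2)$), then substituting $w=\sin(\phi/2)$ and $w=(M/2D)v$, the integral becomes $\omega_{n-2}\,M^{n+1}D^{1-n}\int_0^1(1-v)^2v^{n-2}\bigl(1-\tfrac{M^2}{4D^2}v^2\bigr)^{(n-3)/2}\,dv$. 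Since $nV(B)/\beta(1/2,(n-1)/2)=\omega_{n-2}$ and $\int_0^1(1-v)^2v^{n-2}\,dv=\beta(3,n-1)$, one recovers exactly $\alpha_n D^{1-n}M^{n+1}$ after replacing the weight $\bigl(1-\tfrac{M^2}{4D^2}v^2\bigr)^{(n-3)/2}$ by $1$.

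The crux is justifying that replacement. For $n=2,3$ the exponent $(n-3)/2$ is $\le 0$, so the weight is $\ge 1$ on $[0,1]$ and the bound with constant $\alpha_n$ follows at once (strictly when $n=2$). For $n\ge 4$ the weight is $<1$, so the naive comparison of the cap of chordal radius $M/D$ with a flat $(n-1)$-ball costs a factor $(1-M^2/4D^2)^{(n-3)/2}$; recovering the sharp $\alpha_n$ then needs a finer input --- either a sharpened lower bound for the measure of that cap, which is available precisely because it lies in a hemisphere, or the extra rigidity from $f$ being a \emph{difference of support functions} rather than merely $D$-Lipschitz: such a difference cannot fall off from its maximum as steeply as a cone of slope $D(K\cup L)$ while the diameter is fixed, since a valley of $s_K$ or $s_L$ that steep in every tangent direction at $u_0$ would force the body in question to contain a ball of radius comparable to $D(K\cup L)$, hence to have diameter exceeding it. Carrying out this refinement is Vitale's computation, and it is the step I expect to demand real care.
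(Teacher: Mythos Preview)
The paper does not prove this lemma: it is simply quoted from \cite[Lemma~7.6.4]{Schneider} and attributed to Vitale, so there is no in-paper argument to compare against. Your approach---the Lipschitz bound $|f(u)-f(v)|\le D(K\cup L)\,|u-v|$ for $f=s_K-s_L$, the cone minorant $|f(u)|\ge(M-D|u-u_0|)_+$, and the explicit spherical-cap integral---is exactly Vitale's, and your computation of the integral is correct.

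The gap you yourself identify for $n\ge4$ is real and is not closed by what you write. Since $M\le D$, the weight satisfies $(1-\tfrac{M^2}{4D^2}v^2)^{(n-3)/2}\ge(3/4)^{(n-3)/2}$, so your argument delivers the inequality with $(3/4)^{(n-3)/2}\alpha_n$ in place of $\alpha_n$, but not $\alpha_n$ itself. Your first proposed remedy does not help: for $n\ge4$ a spherical cap of chordal radius $\rho\le1$ has \emph{smaller} $(n-1)$-measure than the Euclidean $(n-1)$-ball of radius $\rho$ (the factor $(1-w^2)^{(n-3)/2}$ is precisely the Jacobian, and it is $\le1$), so ``lying in a hemisphere'' yields nothing extra. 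Your second remedy---that the cone minorant cannot be nearly sharp in every direction without forcing the diameter up---is a plausible geometric obstruction, but as written it is a heuristic, not an argument. For the purposes of this paper the precise constant is irrelevant: Lemmas~\ref{lema: stab} and~\ref{lema: stab2} and Theorem~\ref{thm: thm 1} use Lemma~\ref{lem: Vitale} only through some positive dimensional constant, so the inequality with $(3/4)^{(n-3)/2}\alpha_n$ that your argument does establish already suffices for everything downstream.
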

The following well-known entropy functionals on the set of convex bodies for $-n\leq p<0$ appear in connection to the $L_p$-Minkowski problems:
\[\mathcal{E}_{p}(K)=\inf_{x\in\operatorname{int}K}\int_{\mathbb{S}^{n-1}} (s_K-x\cdot u)^pd\theta.\]
The infimum are in fact achieved in interior points $e_{p}(K)$, see \cite{Ivaki2015}. Moreover, the Jensen, H\"{o}lder and Blaschke-Santal\'{o} inequality\footnote{The Blaschke-Santal\'{o} inequality states that $V(K)\inf\limits_{x\in\operatorname{int}K}\int_{\mathbb{S}^{n-1}} \frac{1}{(s_K(u)-x\cdot u)^n}d\theta\leq nV(B)^2.$ The unique point for which this infimum is achieved is called the Santal\'{o} point.} inequalities imply that if $V(K)=V(B)$, then
\begin{equation}\label{eq: rel entropy}
\exp(-\frac{\mathcal{E}(K)}{nV(B)})\leq \frac{\mathcal{E}_{-1}(K)}{nV(B)}\leq \sqrt{\frac{\mathcal{E}_{-2}(K)}{nV(B)}}\leq \cdots\leq \left(\frac{\mathcal{E}_{-n}(K)}{nV(B)}\right)^{\frac{1}{n}} \leq 1.
\end{equation}
\begin{lemma}\label{lema: stab} Let $0<\epsilon<1.$
Suppose $V(K)=V(B)$, and $\frac{\mathcal{E}_{-1}(K)}{nV(B)}\geq (1-\epsilon).$
Then
\[\delta_{\mathcal{H}}^{n+1}(K-e_{-2}(K),B_{r})
\leq \frac{2nV(B)}{\alpha_n}D(K)^2\left(D(K)+\frac{2}{1-\epsilon}\right)^{n-1}\epsilon,\]
for a constant $1\leq r\leq\frac{1}{1-\epsilon}$.
\end{lemma}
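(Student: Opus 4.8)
The plan is to deduce from \eqref{eq: rel entropy} that $\mathcal{E}_{-2}(K)$ is nearly extremal, convert this into an $L^2$ (i.e.\ $\delta_2$) estimate for $K-e_{-2}(K)$ against a suitable ball $B_r$, and then feed the result into Vitale's Lemma~\ref{lem: Vitale} to pass to a Hausdorff bound. Concretely, from $\frac{\mathcal{E}_{-1}(K)}{nV(B)}\ge 1-\epsilon$ together with the chain \eqref{eq: rel entropy} one gets $(1-\epsilon)^2\le\frac{\mathcal{E}_{-2}(K)}{nV(B)}\le 1$. I would then \emph{define} the radius by $r:=\bigl(\mathcal{E}_{-2}(K)/(nV(B))\bigr)^{-1/2}$, so that automatically $1\le r\le\frac{1}{1-\epsilon}$ and, crucially, $r^2\,\mathcal{E}_{-2}(K)=nV(B)$. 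Writing $\bar s:=s_{K-e_{-2}(K)}=s_K-e_{-2}(K)\cdot u$, I record three facts: the pointwise bound $0<\bar s\le D(K)$ (valid since $0\in\operatorname{int}(K-e_{-2}(K))$, hence every point of that body has norm at most its diameter), the identity $\int_{\mathbb{S}^{n-1}}\bar s^{-2}\,d\theta=\mathcal{E}_{-2}(K)$, and the inequality $\int_{\mathbb{S}^{n-1}}\bar s^{-1}\,d\theta\ge\mathcal{E}_{-1}(K)\ge(1-\epsilon)nV(B)$, the latter because $e_{-2}(K)$ is a competitor, not necessarily the optimizer, for the $(-1)$-entropy.

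The core computation rests on the elementary identity $(\bar s-r)^2=\bar s^2r^2(\bar s^{-1}-r^{-1})^2$. The trick is to bound $\bar s^2\le D(K)^2$ pointwise while keeping $r^2$ \emph{inside} the integral, paired with $\bar s^{-2}$:
\[\delta_2(K-e_{-2}(K),B_r)^2=\int_{\mathbb{S}^{n-1}}(\bar s-r)^2\,d\theta\le D(K)^2\int_{\mathbb{S}^{n-1}}\bigl(r^2\bar s^{-2}-2r\bar s^{-1}+1\bigr)\,d\theta.\]
Since $\int_{\mathbb{S}^{n-1}}r^2\bar s^{-2}\,d\theta=r^2\mathcal{E}_{-2}(K)=nV(B)=\int_{\mathbb{S}^{n-1}}1\,d\theta$, the bracketed integral equals $2nV(B)-2r\int_{\mathbb{S}^{n-1}}\bar s^{-1}\,d\theta$, which by $r\ge1$ and the lower bound on $\int\bar s^{-1}\,d\theta$ is at most $2nV(B)-2(1-\epsilon)nV(B)=2\epsilon\,nV(B)$. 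Hence $\delta_2(K-e_{-2}(K),B_r)^2\le 2nV(B)D(K)^2\epsilon$.

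To finish, apply Lemma~\ref{lem: Vitale} to $K-e_{-2}(K)$ and $B_r$: since both contain the origin, $D\bigl((K-e_{-2}(K))\cup B_r\bigr)\le D(K-e_{-2}(K))+D(B_r)=D(K)+2r\le D(K)+\frac{2}{1-\epsilon}$, and the asserted inequality drops out. The one delicate point — where I expect the only real obstacle — is the bookkeeping around $r$: choosing $r$ so that $r^2\mathcal{E}_{-2}(K)=nV(B)$ is exactly what makes the quadratic expansion of $(\bar s-r)^2$ collapse to a clean multiple of $\epsilon$; pulling $r^2$ out of the integral before estimating, or using instead the $L^2$-optimal radius $r=\frac{1}{nV(B)}\int_{\mathbb{S}^{n-1}}\bar s\,d\theta$, leaves a stray power of $r$ that cannot be absorbed into the stated bound. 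The pointwise estimate $\bar s\le D(K)$ and the trivial diameter-of-union bound are the other ingredients, but both are routine.
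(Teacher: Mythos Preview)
Your proof is correct and is essentially the same as the paper's: the paper also sets $r=\bigl(nV(B)/\mathcal{E}_{-2}(K)\bigr)^{1/2}$, bounds $\|s_{-2}-r\|_2^2$ by $2nV(B)D(K)^2\epsilon$ via the same algebraic identity (written there in normalized form as \eqref{eq: uu}) together with $\int s_{-2}^{-1}\ge\mathcal{E}_{-1}(K)\ge(1-\epsilon)nV(B)$, and then applies Vitale's lemma with the identical diameter bound. The only cosmetic difference is that the paper packages the expansion of $(\bar s-r)^2=\bar s^2 r^2(\bar s^{-1}-r^{-1})^2$ as the $L^2$-identity \eqref{eq: uu} and then ``rearranges,'' whereas you expand directly.
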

\begin{proof}
For simplicity, we denote the support function of $K-e_{-1}(K)$ by $s_{-1}$ and the support function of $K-e_{-2}(K)$ by $s_{-2}.$ Since $e_{-1}(K),e_{-2}(K)\in\operatorname{int} K$, $s_{-2}$ and $s_{-1}$ are both positive.
The following identity holds
\begin{equation}\label{eq: uu}\left|\left|\frac{\frac{1}{s_{-2}}}
{\left(\int_{\mathbb{S}^{n-1}}\frac{1}{s_{-2}^2}d\theta\right)^{\frac{1}{2}}}-\frac{1}{\left(\int_{\mathbb{S}^{n-1}}d\theta\right)^{\frac{1}{2}}}\right|\right|_2^2=2 \left(1-\frac{\int_{\mathbb{S}^{n-1}}\frac{1}{s_{-2}}d\theta}{\left(\int_{\mathbb{S}^{n-1}}
	\frac{1}{s_{-2}^2}d\theta\right)^{\frac{1}{2}}
	\left(\int_{\mathbb{S}^{n-1}}d\theta\right)^{\frac{1}{2}}}\right).
\end{equation}
Since by the assumption
\begin{equation}\label{eq: 5}
\int_{\mathbb{S}^{n-1}}\frac{1}{s_{-2}}d\theta\geq \int_{\mathbb{S}^{n-1}}\frac{1}{s_{-1}}d\theta\geq nV(B)(1-\epsilon),
\end{equation}
we obtain
\begin{align}\label{eq: u}
(1-\epsilon)\leq\frac{\int_{\mathbb{S}^{n-1}}\frac{1}{s_{-2}}d\theta}{\left(\int_{\mathbb{S}^{n-1}}\frac{1}{s_{-2}^2}d\theta\right)^{\frac{1}{2}}
\left(\int_{\mathbb{S}^{n-1}}d\theta\right)^{\frac{1}{2}}}\leq 1.
\end{align}
Rearranging terms in (\ref{eq: uu}) and using (\ref{eq: u}) yield
\begin{equation}\label{eq: u1}
\left|\left|s_{-2}-\frac{\left(\int_{\mathbb{S}^{n-1}}d\theta\right)^{\frac{1}{2}}}{\left(\int_{\mathbb{S}^{n-1}}\frac{1}{s_{-2}^2}d\theta\right)^{\frac{1}{2}}}
\right|\right|_2^2\leq  2nV(B)D(K)^2\epsilon.
\end{equation}
Define $r:=\frac{\left(\int_{\mathbb{S}^{n-1}}d\theta\right)^{\frac{1}{2}}}{\left(\int_{\mathbb{S}^{n-1}}\frac{1}{s_{-2}^2}d\theta\right)^{\frac{1}{2}}}.$ Note that \[nV(B)(1-\epsilon)^2\leq\int_{\mathbb{S}^{n-1}}\frac{1}{s_{-2}^2}d\theta\leq \int_{\mathbb{S}^{n-1}}\frac{1}{(s_K(u)-q\cdot u)^2}d\theta\leq nV(B),\] where $q$ is the Santal\'{o} point of $K$ and we used the H\"{o}lder and Blaschke-Santal\'{o} inequality to get the right-hand side inequality, and H\"{o}lder's inequality and inequality (\ref{eq: 5}) to get the left-hand side inequality. Hence we have a lower and upper bound on $r:$
\begin{align}\label{eq: u2}
	1\leq r\leq\frac{1}{1-\epsilon}.
\end{align}
Lemma \ref{lem: Vitale}, (\ref{eq: u1}), and (\ref{eq: u2}) imply the claim:
\begin{align*}
\delta_{\mathcal{H}}(K-e_{-2}(K),B_{r})^{n+1}&=\max_{\mathbb{S}^{n-1}}\left|s_{-2}-\frac{\left(\int_{\mathbb{S}^{n-1}}d\theta\right)^{\frac{1}{2}}}
{\left(\int_{\mathbb{S}^{n-1}}\frac{1}{s_{-2}^2}d\theta\right)^{\frac{1}{2}}}\right|^{n+1}
\\
&\leq \frac{2nV(B)}{\alpha_n}D(K)^2\left(D(K)+\frac{2}{1-\epsilon}\right)^{n-1}\epsilon.
\end{align*}
\end{proof}
\begin{lemma}\label{lema: stab2}
Suppose $V(K)=V(B)$. If
$\mathcal{E}(K)\leq \varepsilon,$
then
\begin{align*}
\delta_{\mathcal{H}}^{n+1}(K-e_{-2}(K),B_{r})\leq \frac{2nC_n^2(C_n+2)^{n-1}V(B)}{\alpha_n}\exp(\frac{(n+1)\varepsilon}{nV(B)})(1-\exp(-\frac{\varepsilon}{nV(B)})),
\end{align*}
for a constant $1\leq r\leq\exp(\frac{\varepsilon}{nV(B)})$ and a dimensional constant $C_n>0.$
\end{lemma}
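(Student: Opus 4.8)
The plan is to derive Lemma~\ref{lema: stab2} from Lemma~\ref{lema: stab} by translating the hypothesis on Firey's entropy $\mathcal{E}$ into the hypothesis on the $L_{-1}$-entropy $\mathcal{E}_{-1}$ that Lemma~\ref{lema: stab} requires, and then controlling the diameter $D(K)$ that appears on the right-hand side. First, if $\mathcal{E}(K)\le\varepsilon$ and $V(K)=V(B)$, then the chain of inequalities \eqref{eq: rel entropy} gives $\exp(-\varepsilon/(nV(B)))\le\exp(-\mathcal{E}(K)/(nV(B)))\le\mathcal{E}_{-1}(K)/(nV(B))$, so I may apply Lemma~\ref{lema: stab} with $\epsilon:=1-\exp(-\varepsilon/(nV(B)))\in(0,1)$. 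With this choice, $1-\epsilon=\exp(-\varepsilon/(nV(B)))$, hence the constant $r$ produced by Lemma~\ref{lema: stab} satisfies $1\le r\le 1/(1-\epsilon)=\exp(\varepsilon/(nV(B)))$, which is precisely the range claimed. Likewise $2/(1-\epsilon)=2\exp(\varepsilon/(nV(B)))$.

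Second, I need a dimensional bound $D(K)\le C_n$ on the diameter of $K$ under the standing assumptions $V(K)=V(B)$ and $\mathcal{E}(K)\le\varepsilon$ (where $\varepsilon$ may be assumed bounded, say $\varepsilon\le nV(B)$, so that the exponentials are bounded dimensionally; for larger $\varepsilon$ the statement is trivial or one absorbs it). This is where the fine properties of Firey's entropy from Guan--Ni~\cite{GN} enter: a convex body of fixed volume whose entropy is bounded above cannot be too elongated, because a long thin body has large entropy. Concretely, $\mathcal{E}(K)\ge\int_{\mathbb{S}^{n-1}}\log(s_K(u)-e(K)\cdot u)\,d\theta$ and one can bound this integral below in terms of the diameter: on a fixed-size cap of directions the integrand is comparable to $\log D(K)$ while on the complementary directions it is bounded below by a dimensional constant, so $\mathcal{E}(K)\ge c_n\log D(K)-C_n'$, giving $D(K)\le C_n$ once $\varepsilon$ is controlled. (Alternatively this diameter bound is already recorded in \cite{GN} as part of their $C^0$ estimate and may simply be quoted.)

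Finally I substitute into the conclusion of Lemma~\ref{lema: stab}:
\[
\delta_{\mathcal{H}}^{n+1}(K-e_{-2}(K),B_r)\le\frac{2nV(B)}{\alpha_n}D(K)^2\Bigl(D(K)+\frac{2}{1-\epsilon}\Bigr)^{n-1}\epsilon,
\]
and bound $D(K)\le C_n$, $2/(1-\epsilon)=2\exp(\varepsilon/(nV(B)))$, and $\epsilon=1-\exp(-\varepsilon/(nV(B)))$. Using $D(K)+2\exp(\varepsilon/(nV(B)))\le C_n+2\exp(\varepsilon/(nV(B)))\le(C_n+2)\exp(\varepsilon/(nV(B)))$ (after enlarging $C_n$ if needed so $C_n\ge1$) and $D(K)^2\le C_n^2$, the right-hand side is at most
\[
\frac{2nV(B)}{\alpha_n}C_n^2(C_n+2)^{n-1}\exp\Bigl(\frac{(n-1)\varepsilon}{nV(B)}\Bigr)\bigl(1-\exp(-\tfrac{\varepsilon}{nV(B)})\bigr),
\]
which is bounded by the claimed expression since $(n-1)\le(n+1)$. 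This yields exactly the stated inequality with the dimensional constant $C_n$.

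The main obstacle is the diameter bound: everything else is a bookkeeping substitution of $\epsilon=1-\exp(-\varepsilon/(nV(B)))$ into Lemma~\ref{lema: stab}, but to get a \emph{dimensional} constant $C_n$ one genuinely needs that a normalized convex body of small (or merely bounded) Firey entropy has controlled diameter, which is the content of the Guan--Ni a priori estimates and must be invoked carefully — in particular one should note that the bound on $D(K)$ depends only on an upper bound for $\varepsilon$, and one may harmlessly restrict to $\varepsilon$ small since the lemma is only used in that regime.
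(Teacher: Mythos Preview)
Your proposal is correct and follows essentially the same approach as the paper: set $\epsilon=1-\exp(-\varepsilon/(nV(B)))$, use \eqref{eq: rel entropy} to verify the hypothesis of Lemma~\ref{lema: stab}, and invoke the Guan--Ni diameter control to bound $D(K)$.

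The only minor difference is in how you handle the diameter. The paper quotes the explicit bound $D(K)\le C_n\exp(\varepsilon/(nV(B)))$ from \cite[Corollary~2.1]{GN}, valid for all $\varepsilon>0$; substituting this directly yields $D(K)^2\le C_n^2\exp(2\varepsilon/(nV(B)))$ and $D(K)+2/(1-\epsilon)\le(C_n+2)\exp(\varepsilon/(nV(B)))$, so the factor $\exp((n+1)\varepsilon/(nV(B)))$ comes out exactly as $2+(n-1)=n+1$. Your route---restricting to bounded $\varepsilon$ to force $D(K)\le C_n$ and then noting $(n-1)\le(n+1)$---reaches the same conclusion but introduces an unnecessary case distinction and slightly obscures why the exponent $(n+1)$ is natural. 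Quoting the $\varepsilon$-dependent diameter bound directly is cleaner and removes the need to assume $\varepsilon$ is small.
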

\begin{proof}
By \cite[Corollary 2.1]{GN} we have $D(\tilde{K})\leq C_n\exp(\frac{\varepsilon}{nV(B)})$ for a dimensional constant $C_n>0.$ The claim then follows from inequality (\ref{eq: rel entropy}) and Lemma \ref{lema: stab} by setting $\epsilon=1-\exp(-\frac{\varepsilon}{nV(B)}).$
\end{proof}
Now we proceed to give the proof of Theorem \ref{thm: thm 1}.
We recall from \cite[Lemma 4.1]{GN} that
\[\mathcal{E}(\tilde{K})\geq \int_{\mathbb{S}^{n-1}} \log(s_{\tilde{K}}-e_{-2}(\tilde{K}))d\theta+C|e_{-2}(\tilde{K})-e(\tilde{K})|^2, \]
where $C$ depends only on $n$ and diameter for which we have $2\leq D(\tilde{K})\leq C_n\exp(\frac{\varepsilon}{nV(B)})$. On the other hand
\[\exp(-\frac{\int_{\mathbb{S}^{n-1}} \log(s_{\tilde{K}}-e_{-2}(\tilde{K}))d\theta}{nV(B)})\leq \left(\frac{\mathcal{E}_{-2}(\tilde{K})}{nV(B)}\right)^{\frac{1}{2}}\leq 1\Rightarrow \int_{\mathbb{S}^{n-1}} \log(s_{\tilde{K}}-e_{-2}(\tilde{K}))d\theta\geq 0.\]
Therefore \[\mathcal{E}(\tilde{K})\leq\varepsilon\Rightarrow|e_{-2}(\tilde{K})-e(\tilde{K})|^2\leq \frac{\varepsilon}{C}.\]
Combining this last inequality with Lemma \ref{lema: stab2} we infer that if $\mathcal{E}(\tilde{K})\leq\varepsilon$, then
\begin{align*}
\delta_{\mathcal{H}}(\tilde{K}&-e(\tilde{K}),B)\\
&\leq \delta_{\mathcal{H}}(\tilde{K}-e(\tilde{K}),\tilde{K}-e_{-2}(\tilde{K}))+\delta_{\mathcal{H}}(\tilde{K}-e_{-2}(\tilde{K}),B_r)+\delta_{\mathcal{H}}(B_r,B)\\
&\leq\sqrt{\frac{\varepsilon}{C}} +\delta_{\mathcal{H}}(\tilde{K}-e_{-2}(\tilde{K}),B_r)+ (\exp(\frac{\varepsilon}{nV(B)})-1).
\end{align*}
To complete the proof observe that the term $$\left(\frac{2nC_n^2(C_n+2)^{n-1}V(B)}{\alpha_n}\exp(\frac{(n+1)\varepsilon}{nV(B)})(1-\exp(-\frac{\varepsilon}{nV(B)}))\right)^{\frac{1}{n+1}}\sim o(\varepsilon^{\frac{1}{n+1}})$$ has the lowest order amongst all.
\section{proof of Theorem \ref{thm: thm 2}}
Write $r_-$ for the radius of the maximal ball enclosed by $K$ and  $r_+$ for the radius of the minimal ball enclosing $K$. We start with $0<\varepsilon_0\leq (\frac{1}{2\gamma_n})^{n+1}.$ By \cite[Theorem 3.4]{GN}, entropy is non-increasing along the normalized flow, so $\mathcal{E}(\tilde{K}_t)\leq \varepsilon_0.$

We apply the technique introduced by Andrews and McCoy in \cite{Andrewsjames2012}.
By Theorem \ref{thm: thm 1}, there exists $\varepsilon_1>0$ such that if $\mathcal{E}(\tilde{K}_0)\leq\varepsilon_0:=\min\{\varepsilon_1,(\frac{1}{2\gamma_n})^{n+1}\}$, then for all $t\ge0$ we have $\frac{r_+(\tilde{K}_t)}{r_-(\tilde{K}_t)}\leq 1+\eta=1+\frac{2\gamma_n\varepsilon^{\frac{1}{n+1}}}{1-\gamma_n\varepsilon^{\frac{1}{n+1}}}$, and $(1+2\eta)^{n}\leq\frac{5}{4}.$ Then the argument given in \cite[Section 12]{Andrewsjames2012} implies that for $t\geq \frac{3r_-(K_0)}{4n}$\footnote{Note that $T\geq \frac{r_-(K_0)^{n}}{n}$.} we have
$\mathcal{K}(T-t)^{\frac{n-1}{n}}\geq c>0 $ for a constant $c$ independent of the solution (That is, we get the same lower bound for any solution of Gauss curvature flow satisfying initially the entropy condition $\mathcal{E}\leq\varepsilon_0$.). Furthermore, an application of \cite[Theorem 6]{Andrews2000} applied to $\bar{s}^{t_{\ast}}(\cdot,t)=\frac{s(\cdot,t_{\ast}+(T-t_{\ast})t)}{(T-t_{\ast})^{1/n}}$ on $[-1,0]$ for any $t_{\ast}\ge T/2$ \footnote{$\bar{s}^{t_{\ast}}$ is a solution of the Gauss curvature flow (\ref{e: flow1}).} implies that
\[\mathcal{K}(T-t)^{\frac{n-1}{n}}\leq C<\infty  \]
for $t\ge T/2$ and for a constant $C$ independent of the solution.

In the rest of this section we assume that $\mathcal{E}(\tilde{K}_0)\leq\varepsilon_0$.
Consider $\bar{s}^{t_{\ast}}(\cdot,t)=\frac{s(\cdot,t_{\ast}+(T-t_{\ast})t)}{(T-t_{\ast})^{1/n}}$ on $[-1,0]$ for $t_{\ast}\in [\max\{3T/4,\frac{T+\frac{3r_-(K_0)^{n}}{4n}}{2}\},T).$ At time $t=-1$ the Gauss curvature has uniform lower and upper bounds. Moreover
\begin{itemize}
  \item  The minimum of the Gauss curvature is non-decreasing, \cite[Proposition 2.2]{Tso}.
  \item  The Gauss curvature remains bounded above as long as the $\bar{s}^{t_{\ast}}(\cdot,t)$ has a positive lower bound. The upper bound on the Gauss curvature depends only on the upper and lower bounds on $\bar{s}^{t_{\ast}}(\cdot,t),$ and the Gauss curvature of the convex body associated with $\bar{s}^{t_{\ast}}(\cdot,-1)$, \cite[Inequality (3.5)]{Tso}.
  \item  On $[-1,0]$, in view of $s_{\tilde{K}_t}=(n(T-t))^{-\frac{1}{n}}s_{K_t}$ and Theorem \ref{thm: thm 1} we get
  \begin{align*}
  \bar{s}^{t_{\ast}}(\cdot,t)\leq \bar{s}^{t_{\ast}}(\cdot,-1)=(2n)^{\frac{1}{n}}s_{\tilde{K}_{2t_{\ast}-T}}\leq (2n)^{\frac{1}{n}}(1+|e(\tilde{K}_{2t_{\ast}-T})|+\gamma_n\varepsilon^{\frac{1}{n+1}}),
  \end{align*}
  \begin{align*}
  n^{\frac{1}{n}}(1-|e(\tilde{K}_{t_{\ast}})|-\gamma_n\varepsilon^{\frac{1}{n+1}})\leq n^{\frac{1}{n}}s_{\tilde{K}_{t_{\ast}}}=\bar{s}^{t_{\ast}}(\cdot,0)\leq \bar{s}^{t_{\ast}}(\cdot,t).
  \end{align*}
\end{itemize}
Since $e(\tilde{K}_t)$ converges to the origin \cite{GN}, there exists $\zeta(K_0)$ such that for $t_{\ast}\in [\max\{3T/4,\frac{T+\frac{3r_-(K_0)^{n}}{4n}}{2},\zeta(K_0)\},T)$ we have $|e(\tilde{K}_{t_{\ast}})|\leq 1/4$ and so
   \begin{align*}
  n^{\frac{1}{n}}\frac{1}{4}\leq \bar{s}^{t_{\ast}}(\cdot,t)\leq (2n)^{\frac{1}{n}}\frac{7}{4}.
  \end{align*}
Therefore the Gauss curvature of $\bar{s}^{t_{\ast}}(\cdot,t)$ has uniform lower and upper bounds independent of $t_{\ast}\geq \max\{3T/4,\frac{T+\frac{3r_-(K_0)^{n}}{4n}}{2},\zeta(K_0)\}$ and $t\in[-1,0].$ Applying \cite[Theorem 10]{Andrews2000} then implies that
there are uniform lower and upper bounds on the principal curvatures of $\bar{s}^{t_{\ast}}(\cdot,t)$ on $t\in[-1/2,0]$ (these bounds are again independent of the solution $\tilde{K}_t$) and thus
$\bar{s}^{t_{\ast}}(\cdot,t)$ satisfies a uniformly parabolic equation on $[-1/2,0]$ with the parabolicity constant independent of $\tilde{K}_t.$ Uniform
bounds on all higher derivatives of the support functions now follow from a result of Krylov-Safanov \cite{KrylovSafonov} and Schauder estimates \cite[Theorem 4.9]{L}, see \cite[Theorem 4.1]{Tso} for details. In summary, if $\mathcal{E}(\tilde{K}_0)\leq \varepsilon\leq\varepsilon_0$ then for $t\geq\max\{3T/4,\frac{T+\frac{3r_-(K_0)^{n}}{4n}}{2},\zeta(K_0)\}$ and $k\ge1$
\[\sup_{\mathbb{S}^{n-1}}|s_{\tilde{K}_t-e(\tilde{K}_t)}-1|+\sum_{1\leq|m|\leq k}\sup_{\mathbb{S}^{n-1}}|\bar{\nabla}^ms_{\tilde{K}_t}|\leq C_k,\]
where $C_k>0$ are independent of $\tilde{K}_t$.
This, in particular, gives
\begin{align*}
\sup_{\mathbb{S}^{n-1}}|s_{\tilde{K}_{t}-e(\tilde{K}_t)}-1|+\sum_{1\leq|m|\leq k}\sup_{\mathbb{S}^{n-1}}|\bar{\nabla}^ms_{\tilde{K}_{t}-e(\tilde{K}_t)}|&\leq C_k'.
\end{align*}
We may then employ Theorem \ref{thm: thm 1}, the interpolation inequality and the Sobolev embedding theorem on $\mathbb{S}^{n-1}$ (e.g., Urbas \cite[Lemma 3.11]{Urbas}) to obtain
\[\|s_{\tilde{K}_t-e(\tilde{K}_t)}-1\|_{C^k}\leq f_{n,k}(\varepsilon)\]
for $t\geq\max\{3T/4,\frac{T+\frac{3r_-(K_0)^{n}}{4n}}{2},\zeta(K_0)\}.$ Here $f_{n,k}\ge0$ is some continuous, non-decreasing function, independent of $\tilde{K}_t$, such that $f_{n,k}\to0$ as $\varepsilon\to 0.$
\section{Proof of Theorem \ref{thm: thm 3}}
Chow \cite{Chow1985} proved that if the initial hypersurface $\partial\tilde{K}_0$ satisfies a certain pinching estimate, then $\tilde{K}_t$ converges to the unit ball in $C^{\infty}$. Choose $0<\varepsilon_1\leq\varepsilon_0$ small enough such that any smooth, strictly convex body with $\|s_{\tilde{K}-e(\tilde{K})}-1\|_{C^2}\leq f_{n,2}(\varepsilon_1)$ is forced to satisfy Chow's pinching assumption. Therefore in view of Theorem \ref{thm: thm 2}, if $\mathcal{E}(\tilde{K}_0)\leq\varepsilon_1$ then Chow's pinching assumption must hold for all $\tilde{K}_{t}$, $t$ close to $T$, so $\tilde{K}_t$ flows to the unit ball. $\Box$\\\\
\textbf{Acknowledgment}: The work of the author was supported by Austrian Science Fund (FWF) Project M1716-N25.
\bibliographystyle{amsplain}

\begin{thebibliography}{10}
\bibitem{Andrews1997}Andrews, Ben. ``Monotone quantities and unique limits for evolving convex hypersurfaces." International Mathematics Research Notices 1997, no. 20 (1997): 1001--1031.
\bibitem{Andrews1991}Andrews, Ben. ``Gauss curvature flow: the fate of the rolling stones." Inventiones mathematicae 138, no. 1 (1999): 151--161.
\bibitem{Andrews2000}Andrews, Ben. ``Motion of hypersurfaces by Gauss curvature." Pacific Journal of Mathematics 195, no. 1 (2000): 1--34.
\bibitem{Andrewsjames2012}Andrews, Ben, and James McCoy. ``Convex hypersurfaces with pinched principal curvatures and flow of convex hypersurfaces by high powers of curvature." Transactions of the American Mathematical Society 364, no. 7 (2012): 3427--3447.
\bibitem{Chow1985}Chow, Bennett. ``Deforming convex hypersurfaces by the $n$th root of the Gaussian curvature." Journal of Differential Geometry 22, no. 1 (1985): 117--138.
\bibitem{Chow1991}Chow, Bennett. ``On Harnack's inequality and entropy for the Gaussian curvature flow." Communications on pure and applied mathematics 44, no. 4 (1991): 469--483.
\bibitem{Firey}Firey, William J. ``Shapes of worn stones." Mathematika 21, no. 1 (1974): 1--11.
\bibitem{GN}Guan, Pengfei, and Lei Ni. ``Entropy and a convergence theorem for Gauss curvature flow in high dimension." to appear in the Journal of the European Mathematical Society., available at arXiv:1306.0625 (2013).
\bibitem{KrylovSafonov}Krylov, Nicolai V., and Mikhail V. Safonov. ``A certain property of solutions of parabolic equations with measurable coefficients." Mathematics of the USSR-Izvestiya 16, no. 1 (1981): 151.
\bibitem{L}Lieberman, Gary M. ``Second order parabolic differential equations." Vol. 68. Singapore: World scientific, 1996.
\bibitem{Ivaki2015}Ivaki, Mohammad N. ``Deforming a hypersurface by Gauss curvature and support function." arXiv preprint arXiv:1501.05456 (2015).
\bibitem{Schneider}Schneider, Rolf. ``Convex bodies: the Brunn-Minkowski theory." Vol. 151. Cambridge University Press, 2014.
\bibitem{Tso}Tso, Kaising. ``Deforming a hypersurface by its Gauss-Kronecker curvature." Communications on Pure and Applied Mathematics 38, no. 6 (1985): 867--882.
\bibitem{Urbas}Urbas, John IE. ``An expansion of convex hypersurfaces." J. Differential Geom. 33, no. 1 (1991): 91--125.
\end{thebibliography}

\end{document}